\documentclass [a4,10pt,twoside]{article}
\usepackage{amsmath, amssymb, amsthm, amsfonts, amsxtra, latexsym, amscd,
pb-diagram,  graphics,rotating,xy}

\theoremstyle{plain}
\newtheorem{dl}{Theorem}[section]
\newtheorem{bd}[dl]{Lemma}
\newtheorem{md}[dl]{Proposition}
\newtheorem{hq}[dl]{Corollary}
\newtheorem{dn}[dl]{Definition}
\newtheorem{nx}[dl]{Remark}

\usepackage[top=3.0cm, bottom=0cm, left=3cm,right=2cm] {geometry}

\voffset=-0.5cm
\textwidth = 14 cm
\textheight = 22 cm
\evensidemargin=1.cm
\oddsidemargin=1.cm
\headsep=1.cm
\parindent= 1cm
\parskip =4pt


\newcommand{\I}{\mathcal{I}}

\newcommand{\C}{\mathcal{C}}





\begin{document}
\title{The factor set of Gr-categories of the type $(\Pi,A)$}  
\author{Nguyen Tien Quang}         
\maketitle

\begin{abstract}
Any $\Gamma$-graded categorical group is determined by a factor set of a categorical group. This paper studies the factor set of the group $\Gamma$ with coefficients in the categorical group of the type $(\Pi,A).$ Then, an interpretation of the notion of $\Gamma-$operator $3-$cocycle is presented and the proof of cohomological classification theorem for the a $\Gamma-$graded Gr-category is also presented.
\end{abstract}

\indent{\large\bf Introduction}

The notion of a graded monoidal category was presented by Fr\"{o}hlich and Wall [4] by generalization some manifolds of categories with the action of a group $\Gamma.$ Then, $\Gamma$ will be also regarded as a category with exactly one object, (say $\ast$), where the morphisms are the members of $\Gamma$ and the composition law is the group composition operation. A $\Gamma-${\it grading} on a category $\mathcal D$ is a functor $gr: \mathcal D\rightarrow \Gamma.$ The grading is called {\it stable} if for all $C\in ob\mathcal D, \sigma\in \Gamma,$ there is an equivalence $u$ in $\mathcal D$ with domain $C$ and $gr(u)=\sigma$. Then $\sigma$ is the {\it grade} of $u$. If $(\mathcal D, gr)$ is a $\Gamma-$graded category, we define $Ker\mathcal D$ to be the subcategory consisting of all morphisms of grade 1. 

A $\Gamma-${\it monoidal category} consists of: a stably $\Gamma-$graded category $(\mathcal D, gr)$,  $\Gamma-$functors
$$
\otimes: \mathcal D \times_{\Gamma} \mathcal D \longrightarrow \mathcal D\ ,\ \ \ \I: \Gamma \longrightarrow \mathcal D
$$
and natural equivalences (of grade 1)  $a_{X,Y,Z} : (X\otimes Y)\otimes Z  \stackrel{\sim}{\longrightarrow}X\otimes (Y\otimes Z ),$
$l_X : I\otimes X  \stackrel{\sim}{\longrightarrow}X,$
 $r_X : X\otimes I  \stackrel{\sim}{\longrightarrow}X,$
where $I = I(\ast)$,
 satisfying coherence conditions of a monoidal category.
 
For any $\Gamma-$graded category $(\mathcal D, gr),$ authors wrote $Rep(\mathcal D, gr)$ for the category of $\Gamma-$fuctors $F:\Gamma\rightarrow \mathcal D$, and natural transformations. An object of $Rep(\mathcal D, gr)$ thus consists an object $C$ of  $\mathcal D$ with homomorphism $\Gamma\rightarrow Aut_{\mathcal D}(C).$ Homomorphism $\Gamma\rightarrow Aut_{\mathcal D}(I)$ is the right inverse of the graded homomorphism $Aut_{\mathcal D}(I)\rightarrow \Gamma.$ In other words,  $Aut_{\mathcal D}(I)$ is a split extension of the normal subgroup $N$ of automorphisms of grade 1 by the subgroup which is isomorphic to $\Gamma.$ The extension defines an action of $\Gamma$ on $N$ by
$$\sigma u=I(\sigma)\circ u\circ I(\sigma^{-1}).$$

In [1], authors considered the $\Gamma-$graded extension problem of categories as a categorization of the group extension problem. The groups $A, B$ in the short exact sequence:
$$0\rightarrow A\rightarrow B\rightarrow \Gamma\rightarrow 1$$
are replaced with the categories $\C, \mathcal D$. A $\Gamma-$monoidal extension of the monoidal category $\C$ is a $\Gamma-$monoidal category $\mathcal D$ with a monoidal isomorphism $j: \C \longrightarrow Ker \mathcal D.$

The construction and classification problems of $\Gamma-$monoidal extension were solved by raising the main results of Schreier-Eilenberg-MacLane on group extensions to categorical level. With the notations of {\it factor set} and {\it crossed product extension,} authors proved that there exists a bijection
$$\Delta: H^{2}(\Gamma, \C)\leftrightarrow Ext(\Gamma, \C)$$
between the set of congruence classes of factor sets on $\Gamma$ with coefficients in the monoidal category $\C$ and the set of congruence classes of $\Gamma-$extensions of $\C$.

The case $\C$ is a categorical group (also called a Gr-category) was considered in [2]. Then, the $\Gamma-$equivariant structure appears on $\Pi-$module $A$, where $\Pi=\Pi_0(\C), A=Aut_{\C}(1)=\Pi_1(C).$ $\Gamma-$extensions and $\Gamma-$functors are classified by functors
$$ch: _{\Gamma}\C\mathcal G\rightarrow \mathcal H^{3}_{\Gamma}\ ;\ \int_{\Gamma}: \mathcal Z^{3}_{\Gamma}\rightarrow _{\Gamma}\C\mathcal G,$$
\noindent where $_{\Gamma}\C\mathcal G$ is the category of $\Gamma-$extensions, $\mathcal Z^{3}_{\Gamma}$ is the category in which any object is a triple $(\Pi, A, h)$, where $(\Pi, A)$ is a $\Gamma-$pair and $h\in H^{3}$; $\mathcal H^{3}_{\Gamma}$ is the category obtained from $\mathcal Z^{3}_{\Gamma}$ when $h\in Z^{3}_{\Gamma}(\Pi, A)$ is replaced with $h\in H^{3}_{\Gamma}(\Pi, A).$ 

As we know, each categorical group is equivalent to a categorical group of the type $(\Pi,A)$ and the unit constraint is strict (in the sense $l_x=r_x=id_x$). So we may solve the classification problem for this special case thanks to the desription of Gr-functors of Gr-categories of the type $(\Pi,A)$ [6]. We may better describe the factor set, and show that the $\Gamma-$equivariant structure of $A$ is a necessary condition of the factor set. Thus, we may construct $\Gamma-$operator $3-$cocycles as a induced version of a factor set, instead of using complex construction as in [2]. By this way, we may obtain the classification theorem in a stronger form than the result in [2], that is the bijection: 
$$\Omega: \mathcal S_{\Gamma}(\Pi, A)\rightarrow H^3_{\Gamma}(\Pi,A)$$
where $\mathcal S_{\Gamma}(\Pi, A)$ is the set of congruence classes of $\Gamma-$extensions of Gr-categories of the type $(\Pi,A)$.
 
The classification problem of $\Gamma-$functors follows this method will be presented in another paper.
\section {Some notions}
\indent Let $\Pi$ be a group and $A$ be a left $\Pi-$module. A Gr-category of the type $(\Pi,A)$ is a category $\mathcal S=\mathcal S(\Pi,A, \xi)$ in which objects are elements $ x\in\Pi,$ and morphisms are automorphisms
   $$Aut(x)=\{x\}\times A.$$
The composition of two morphisms is defined by
$$(x,u)\circ (x,v)=(x, u+v)$$

\noindent The operation $\otimes$ is defined by
$$ x \otimes y = xy  $$
$$ (x,u) \otimes (y,v) = (xy,u+xv).$$
The associative constraint $a_{x,y,z}$ is a normalized $3-$cocycle (in the sense of group cohomology) $\xi\in Z^{3}(\Pi,A)$, and the unit constraint is strict. Then from now on, a Gr-category of the type $(\Pi, A)$ refers to the one with above properties.
\begin{dn} Let $\Gamma$ be a group and let $(\C, \otimes)$ be any monoidal category. We say that a factor set on $\Gamma$ with coefficients in $(\C, \otimes)$ is a pair $(\theta,F)$ consisting of: a family of monoidal autoequivalences
\[F^\sigma = (F^\sigma,\widetilde{F^\sigma},\widehat{F^\sigma}):  \C\longrightarrow \C , (\sigma \in \Gamma)\] 
and a family of isomorphisms of monoidal functors
\[\theta^{\sigma,\tau} : F^\sigma F^\tau \stackrel{\sim}{\longrightarrow} F^{\sigma\tau} , (\sigma, \tau \in \Gamma)\] 
satisfying the conditions\\
\indent i) $F^1 = id_{\C}$\\
\indent ii) $ \theta^{1,\sigma} = id_{F^\sigma} = \theta^{\sigma,1}$ ($\sigma \in \Gamma$)\\
\indent iii) for all $\forall \sigma,\tau,\gamma \in \Gamma,$ the following diagrams are commutative
\[\begin{CD}
F^{\sigma} F^{\tau} F^{\gamma} @> \theta^{\sigma,\tau}F^{\gamma} >> F^{\sigma\tau}F^{\gamma}\\
@V F^{\sigma}\theta^{\tau,\gamma} VV           @VV \theta^{\sigma\tau,\gamma} V\\
F^{\sigma}F^{\tau\gamma} @> \theta^{\sigma,\tau\gamma} >> F^{\sigma\tau\gamma}
\end{CD}\]
\end{dn}
In [6], authors described monoidal functors between monoidal categories of the type $(\Pi, A).$ Thanks to this description, we will prove the necessary conditions of a factor set.

\begin{dn}{[6]}
Let $\mathcal S=(\Pi, A,\xi),\ \mathcal S'=(\Pi',A',\xi')$ be Gr-categories. A functor $F: \mathcal S\to \mathcal S'$ is called a functor of the type $(\varphi,f)$ if
$$F(x)=\varphi(x)\ \ ,\ \ F(x,u)=(\varphi(x), f(u)) $$
and $\varphi:\Pi\to \Pi'$, $f:A\to A'$ is a pair of group homomorphisms satisfying $f(xa)=\varphi (x)f(a)$ for $x\in \Pi, a\in A.$
\end{dn}
We have

\begin{dl}{ [6]}
Let $\mathcal S=(\Pi, A, \xi),\ \mathcal S'=(\Pi',A',\xi')$ be
Gr-categories and $F=(F,\widetilde{F}, \widehat{F})$ be a Gr-functor from $\mathcal
S$ to $\mathcal S'$. Then, $F$ is a functor of the type $(\varphi,f).$
\end{dl}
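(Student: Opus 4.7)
My plan is to exploit the very rigid structure of Gr-categories of type $(\Pi,A)$---whose only morphisms are automorphisms indexed by elements of the module $A$---to force $F$ into the stated shape. The data $(\varphi, f)$ will be read off from the action of $F$ on objects and on automorphisms, while the equivariance $f(xa) = \varphi(x) f(a)$ and the multiplicativity of $\varphi$ will drop out of the existence and the naturality of the monoidal constraint $\widetilde{F}$.

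First I would define $\varphi : \Pi \to \Pi'$ on objects by $\varphi(x) := F(x)$. Since $\mathcal S'$ has only endomorphisms, any morphism $F(x,u) : F(x) \to F(x)$ must lie in $\Aut(\varphi(x)) = \{\varphi(x)\} \times A'$, so $F(x,u) = (\varphi(x), f_x(u))$ for some map $f_x : A \to A'$ that a priori depends on $x$. Applying functoriality to $(x,u)\circ (x,v) = (x, u+v)$, together with the additive composition law in $\Aut(\varphi(x))$, shows that each $f_x$ is a group homomorphism. Similarly, $\widetilde{F}_{x,y}$ is a morphism $\varphi(x)\varphi(y) \to \varphi(xy)$ in $\mathcal S'$, which can exist only when $\varphi(xy) = \varphi(x)\varphi(y)$; hence $\varphi$ is automatically a group homomorphism and $\widetilde{F}_{x,y} = (\varphi(xy), g(x,y))$ for some function $g : \Pi \times \Pi \to A'$.

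The crux is the naturality of $\widetilde{F}$ against a pair of morphisms $(x,u)$ and $(y,v)$. Using $(x,u) \otimes (y,v) = (xy, u + xv)$ in $\mathcal S$ and the analogous tensor formula in $\mathcal S'$, the naturality square---after cancelling the common $g(x,y)$ term using the abelian structure on $A'$---collapses to the identity
$$f_x(u) + \varphi(x)\, f_y(v) = f_{xy}(u + xv).$$
Setting $v = 0$ yields $f_x = f_{xy}$ for all $x, y \in \Pi$, so $f_x$ is independent of $x$; writing $f := f_x$ and then setting $u = 0$ gives the equivariance $f(xv) = \varphi(x) f(v)$. I expect the point requiring the most care to be the bookkeeping of the two distinct binary operations---composition, which is addition in $A'$, versus tensor, which mixes the module action with addition---but once the conventions are pinned down, every required property of $(\varphi, f)$ is forced by functoriality and by the naturality of $\widetilde{F}$ alone, so no appeal to the cocycles $\xi, \xi'$ or to the associativity constraint is actually necessary.
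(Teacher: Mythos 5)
Your proof is correct. Note that the paper itself states this theorem without proof, citing it from reference [6]; your argument is precisely the one the rigidity of type-$(\Pi,A)$ categories forces, and it is the same style of reasoning the paper uses in the proof of Theorem 2.3(i), where the mere existence of the morphisms $\widetilde{F^{\sigma}}_{x,y}$ and $\theta^{\sigma,\tau}_x$ in a category whose only morphisms are automorphisms is used to deduce equalities of objects. Your reading of the naturality square (giving $f_x(u)+\varphi(x)f_y(v)=f_{xy}(u+xv)$, hence the independence of $f_x$ from $x$ and the equivariance $f(xv)=\varphi(x)f(v)$) is sound, and you are right that the associativity constraint and the cocycles $\xi,\xi'$ play no role at this stage.
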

According to this theorem, any monoidal autoequivalence $F^\sigma$ is of the form $F^\sigma = (\varphi^\sigma,f^\sigma)$. This remark is used frequently throughout this paper.
\begin{dn} {[2]} Let $\Gamma$ be a group, $\Pi$ be a $\Gamma-$group. A $\Gamma-$module $A$ is a equivariant module on $\Gamma-$group $\Pi$ if $A$ is a $\Pi-$module satisfying
$$\sigma (xa)=(\sigma x)(\sigma a),$$
for all $\sigma \in \Gamma, x\in \Pi$ and $a \in A$.
\end{dn}
\section {$\Gamma-$graded extension of a $Gr-$category of the type $(\Pi,A)$} 
\indent For a given factor set $(\theta, F)$, we may construct a $\Gamma-$graded crossed product extension of $\C$, denoted by $\Delta(\theta, F)$ as follows:\\
$$ \begin{CD} \mathcal{C} @>j>>\Delta(\theta,F)@>g>>G \end{CD}$$
where $\Delta(\theta, F)$ is a category in which objects are objects of $\C$ and morphisms are pairs $(u, \sigma): A\rightarrow B$ where $\sigma\in\Gamma$ and $u: F^\sigma(A)\rightarrow B$ is a morphism in $\C$. The composition of two morphisms:
$$ \begin{CD} A@>(u, \sigma)>>B@>(v, \tau)>>C \end{CD}$$
is defined by:\\
$$(u, \sigma)\cdot(v, \tau) = (u\cdot F^\sigma(v)\cdot\theta^{\sigma, \tau}(A)^{-1}, \sigma\tau).$$
This composition is associative and the unit exists thanks to cocycle and normalized conditions i), ii), iii) of $(\theta, F)$.\\
\indent A stably $\Gamma-$grading on $\Delta(\theta, F), g:
\Delta(\theta, F)\rightarrow \Gamma$ is defined by $g(u, \sigma) =\sigma$, and the bijection $\begin{CD}j: \mathcal{C}@>\sim>>
Ker(\Delta(\alpha, F))\end{CD}$ is defined by:
 $$\begin{CD} j(A @>u>> B) = (A @>(u, 1)>> B)
\end{CD}$$

\begin{md} If $G:\C\rightarrow \C'$ and $H:C'\rightarrow \C$ are monoidal equivalence such that $\alpha:G\circ H\cong id_{\C'},$ and $\beta:H\circ G\cong id_{\C},$ and $\mathcal D$ is a crossed product $\Gamma-$extension of $\C$ by the factor set $(\theta, F),$ then the quadruple $(G, H,\alpha,\beta)$ induces:\\
 \indent i) the factor set $(\theta', F')$ of $\C'$,\\
 \indent ii) a $\Gamma-$ equivalence
 $$\Delta(\theta, F))\leftrightarrow \Delta(\theta', F')).$$ 
 \end{md}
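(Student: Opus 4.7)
The strategy is to transport the data $(\theta, F)$ along the equivalence $(G, H, \alpha, \beta)$. For part (i), for each $\sigma\in\Gamma$ I set
$$F'^{\sigma} \;=\; G\circ F^{\sigma}\circ H \colon \C' \longrightarrow \C',$$
which is automatically a monoidal autoequivalence since each of its three factors is. To manufacture the isomorphisms $\theta'^{\sigma,\tau}\colon F'^{\sigma}F'^{\tau} \xrightarrow{\sim} F'^{\sigma\tau}$, I insert $\beta\colon HG \xrightarrow{\sim} id_{\C}$ between the two middle copies and compose with the image of $\theta^{\sigma,\tau}$:
$$\theta'^{\sigma,\tau} \;=\; (G\theta^{\sigma,\tau}H)\circ(GF^{\sigma}\,\beta\,F^{\tau}H).$$

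The normalization $F'^{1} = id_{\C'}$ cannot hold strictly, since $GH$ is only isomorphic to the identity via $\alpha$. I therefore replace $F'^{1}$ by $id_{\C'}$ and use $\alpha$ to gauge $\theta'^{1,\sigma}$ and $\theta'^{\sigma,1}$ to the identity; this cosmetic adjustment preserves the associativity condition iii). The latter is then checked by a diagram chase: both composites in the hexagon unfold, after repeated naturality of $\beta$, to a common expression of the form $G(\theta^{\sigma,\tau\gamma})H\circ GF^{\sigma}(\theta^{\tau,\gamma})H\circ(\text{$\beta$-insertions})$, and equality follows from the original cocycle identity for $\theta$.

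For part (ii), I construct a $\Gamma$-functor $\widetilde{G}\colon \Delta(\theta, F) \longrightarrow \Delta(\theta', F')$ defined on objects by $\widetilde{G}(A) = G(A)$ and on a morphism $(u,\sigma)\colon A\to B$ (with $u\colon F^{\sigma}A \to B$) by
$$\widetilde{G}(u,\sigma) \;=\; \bigl(\,G(u)\circ GF^{\sigma}(\beta_{A}),\; \sigma\,\bigr),$$
whose first coordinate is a morphism $F'^{\sigma}GA = GF^{\sigma}HGA \longrightarrow GB$ in $\C'$. Preservation of grade is immediate, and compatibility with the crossed-product composition is a direct consequence of the naturality of $\beta$ together with the definition of $\theta'$ just given. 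A parallel functor $\widetilde{H}\colon \Delta(\theta', F') \to \Delta(\theta, F)$ constructed from $H$, and grade-$1$ natural isomorphisms $\widetilde{G}\widetilde{H}\cong id$, $\widetilde{H}\widetilde{G}\cong id$ built from $\alpha$ and $\beta$, exhibit $\widetilde{G}$ as the desired $\Gamma$-equivalence.

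The main obstacle is the interplay between the strict normalization $F^{1} = id_{\C}$ demanded by Definition 1.1.i)--ii) and the fact that $GH$ is merely isomorphic, not equal, to $id_{\C'}$: one must perform a careful gauge adjustment via $\alpha$ to trivialize $F'^{1}$, $\theta'^{1,\sigma}$, and $\theta'^{\sigma,1}$, and then propagate this normalization consistently through the verification of the cocycle condition and the functoriality of $\widetilde{G}$. The remaining diagram chases are routine, relying only on the naturality of $\alpha, \beta$ and the hypothesis that $(G, H, \alpha, \beta)$ is a monoidal equivalence.
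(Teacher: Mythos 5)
Your construction coincides with the paper's: you take $F'^{\sigma}=G\circ F^{\sigma}\circ H$, define $\theta'^{\sigma,\tau}_{X}=G\bigl(\theta^{\sigma,\tau}_{HX}\circ F^{\sigma}(\beta_{F^{\tau}HX})\bigr)$, and extend $G$ to $\widetilde{G}(u,\sigma)=(G(u\circ F^{\sigma}(\beta_{X})),\sigma)$, exactly as in the paper's proof. Your additional remark about gauging $F'^{1}$, $\theta'^{1,\sigma}$, $\theta'^{\sigma,1}$ to the identity via $\alpha$ is a sensible refinement of a point the paper leaves implicit (``one can verify''), but the approach is the same.
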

\begin{proof}
\indent  i) Let $F'^\sigma$ be the composition $H\circ F^\sigma\circ G$ and $$\theta'^{\sigma, \tau}_{X}=G(\theta^{\sigma, \tau}_{HX}\circ F^\sigma(\beta_{F^{\tau}HX})).$$
One can verify that $(\theta', F')$ is a factor set of $\C'$.

ii) We extend the functor $G:\C\rightarrow \C'$ to a functor
$$\widetilde{G}:\Delta(\theta, F))\leftrightarrow \Delta(\theta', F'))$$
as follows: for the object $X$ of $\C,$ let $\widetilde{G}X=GX;$ for the morphism $(u, \sigma): X\rightarrow Y,$ where $u:F^{\sigma}X\rightarrow Y,$ let $$\widetilde{G}(u, \sigma)=(G(u\circ F^{\sigma}(\beta_{X})), \sigma).$$
One can verify that $\widetilde{G}$ is a $\Gamma-$equivalentce
\end{proof}
From the above proposition, it is deduced that
\begin{hq}
Any $\Gamma-$extension of a Gr-category is equivariant to a $\Gamma-$extension of a Gr-category of the type $(\Pi, A).$
\end{hq}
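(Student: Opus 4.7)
The plan is to combine the preceding proposition with two standard inputs recalled in the introduction. The first is that every $\Gamma$-extension of a monoidal category $\C$ is $\Gamma$-equivalent to a crossed product extension $\Delta(\theta,F)$ for some factor set $(\theta,F)$ on $\C$; this is the content of the bijection between $H^{2}(\Gamma,\C)$ and $Ext(\Gamma,\C)$ recalled above from [1]. The second is the rigidification theorem for categorical groups, also recalled in the introduction: any Gr-category $\C$ is monoidally equivalent to a Gr-category of the type $(\Pi,A)$, namely $\mathcal S(\Pi_0(\C),\Pi_1(\C),\xi)$ for a suitable normalized $3$-cocycle $\xi\in Z^{3}(\Pi,A)$.

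The argument then proceeds as follows. Given an arbitrary $\Gamma$-extension $\mathcal D$ of a Gr-category $\C$, I first replace $\mathcal D$, up to $\Gamma$-equivalence, by a crossed product extension $\Delta(\theta,F)$ associated with some factor set $(\theta,F)$ on $\C$. Next, I invoke the rigidification theorem to choose a Gr-category $\mathcal S=\mathcal S(\Pi,A,\xi)$ of the type $(\Pi,A)$ together with monoidal equivalences $G:\C\to \mathcal S$ and $H:\mathcal S\to \C$ and natural isomorphisms $\alpha:G\circ H\cong id_{\mathcal S}$, $\beta:H\circ G\cong id_{\C}$. Applying the preceding proposition to the quadruple $(G,H,\alpha,\beta)$ and to the factor set $(\theta,F)$ produces a factor set $(\theta',F')$ on $\mathcal S$ together with a $\Gamma$-equivalence $\Delta(\theta,F)\leftrightarrow \Delta(\theta',F')$. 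Composing this with the first $\Gamma$-equivalence yields $\mathcal D \sim \Delta(\theta',F')$, which is by construction a $\Gamma$-extension of the Gr-category $\mathcal S(\Pi,A,\xi)$ of the type $(\Pi,A)$, as required.

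The step most in need of care is the initial reduction to a crossed product extension, since it draws on classification results from [1,2] that are not reproved in this excerpt. Once this input is granted, the corollary is essentially formal: it amounts to transporting the factor set along the rigidification equivalence, which is exactly what the preceding proposition does, and then reading off the resulting crossed product as a $\Gamma$-extension of the rigidified Gr-category.
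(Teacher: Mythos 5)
Your proposal is correct and follows exactly the route the paper intends: the paper states the corollary as an immediate consequence of the preceding proposition, implicitly relying on the same two inputs you make explicit, namely the reduction of an arbitrary $\Gamma$-extension to a crossed product $\Delta(\theta,F)$ via the classification of [1] and the equivalence of any Gr-category with one of the type $(\Pi,A)$. Your write-up simply spells out the details the paper leaves tacit.
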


We now prove some necessary conditions for the existence of a factor set.
 
\begin{dl} \label{dl2.3} Let $\Gamma$ be a group and $\mathcal S=\mathcal S(\Pi, A,\xi)$ be a Gr-category. If $(\theta, F)$ is a factor set of $\Gamma$, with coefficients in $\mathcal S,$ then:\\
 \indent i) there exists a group homomorphism
 $$\varphi: \Gamma \longrightarrow Aut \Pi\ ;\  f:\Gamma \longrightarrow AutA, $$ 
\noindent and $A$ is equiped with a $\Pi-$module $\Gamma-$equivariant structure, induced by $\varphi, f$,\\
\indent ii) in Definition 2.1, the condition i) of a factor set can be deduced from the remaining conditions.
\end{dl}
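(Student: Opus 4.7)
The plan is to combine Theorem 1.3, which describes every Gr-functor of Gr-categories of type $(\Pi,A)$ as one of type $(\varphi,f)$, with the naturality of the isomorphisms $\theta^{\sigma,\tau}$ in order to extract compatible group homomorphisms out of the factor set.

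For part (i), we apply Theorem 1.3 to each monoidal autoequivalence $F^\sigma$ to write $F^\sigma(x)=\varphi^\sigma(x)$ and $F^\sigma(x,a)=(\varphi^\sigma(x),f^\sigma(a))$ for homomorphisms $\varphi^\sigma\colon\Pi\to\Pi$, $f^\sigma\colon A\to A$ satisfying $f^\sigma(xa)=\varphi^\sigma(x)f^\sigma(a)$; since $F^\sigma$ is an autoequivalence, $\varphi^\sigma$ and $f^\sigma$ are automorphisms. Next we deduce the group law from $\theta^{\sigma,\tau}\colon F^\sigma F^\tau\to F^{\sigma\tau}$. Its component at an object $x$ is a morphism $\varphi^\sigma\varphi^\tau(x)\to\varphi^{\sigma\tau}(x)$ in $\mathcal{S}$; since $\Hom(x,y)=\emptyset$ whenever $x\neq y$ in a Gr-category of type $(\Pi,A)$, this immediately forces $\varphi^\sigma\varphi^\tau=\varphi^{\sigma\tau}$. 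Writing $\theta^{\sigma,\tau}_x=(\varphi^{\sigma\tau}(x),t^{\sigma,\tau}(x))$ and applying naturality against a morphism $(x,a)$, together with the fact that composition in $\Aut(x)=\{x\}\times A$ is addition in the abelian group $A$, yields $f^\sigma f^\tau=f^{\sigma\tau}$. Combined with $\varphi^1=id_\Pi$ and $f^1=id_A$ coming from condition (i) of Definition 2.1, this produces the desired group homomorphisms $\varphi\colon\Gamma\to\Aut\Pi$ and $f\colon\Gamma\to\Aut A$. Setting $\sigma x:=\varphi^\sigma(x)$ and $\sigma a:=f^\sigma(a)$ then makes $A$ a $\Pi$-module $\Gamma$-equivariant, since the required $\sigma(xa)=(\sigma x)(\sigma a)$ is precisely the type-$(\varphi^\sigma,f^\sigma)$ compatibility.

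For part (ii), we specialise condition (ii) of Definition 2.1 to $\sigma=1$, obtaining $\theta^{1,1}=id_{F^1}$. For this identity of natural transformations to even parse as an isomorphism of monoidal functors $F^1F^1\to F^1$, we must have $F^1F^1=F^1$ as monoidal functors. On the underlying functors this yields $\varphi^1\varphi^1=\varphi^1$ and $f^1f^1=f^1$; invertibility of these automorphisms gives $\varphi^1=id_\Pi$ and $f^1=id_A$, so $F^1$ is the identity on the underlying category. For the monoidal constraints, the composite formula becomes $\widetilde{F^1F^1}_{x,y}=F^1(\widetilde{F^1}_{x,y})\circ\widetilde{F^1}_{x,y}=\widetilde{F^1}_{x,y}\circ\widetilde{F^1}_{x,y}$; equating with $\widetilde{F^1}_{x,y}$ and using that every morphism in $\mathcal{S}$ is invertible forces $\widetilde{F^1}_{x,y}=id$, and a parallel argument gives $\widehat{F^1}=id$. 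Hence $F^1=id_{\mathcal{S}}$ as a monoidal functor.

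The main obstacle will be the monoidal-structure portion of (ii): one must argue carefully that $\theta^{1,1}=id_{F^1}$ genuinely entails equality of the monoidal functors $F^1F^1$ and $F^1$ (and not merely of their underlying functors), so that the constraint-level idempotence argument can be invoked and closed by invertibility of arrows in the Gr-category.
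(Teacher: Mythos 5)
Your argument is correct and follows essentially the same route as the paper: part (i) is extracted from the type-$(\varphi,f)$ description of Gr-functors, the emptiness of hom-sets between distinct objects of $\mathcal S(\Pi,A,\xi)$ (which forces $\varphi^\sigma\varphi^\tau=\varphi^{\sigma\tau}$), and naturality of $\theta^{\sigma,\tau}$ against the morphisms $(x,a)$ (which gives $f^\sigma f^\tau=f^{\sigma\tau}$); part (ii) then comes from the normalization condition on $\theta$. The only cosmetic difference is in part (ii), where you specialize to $\theta^{1,1}$ and argue by idempotence of the monoidal functor $F^1F^1=F^1$ plus invertibility of morphisms, whereas the paper first records $t^{\sigma,1}=t^{1,\sigma}=0$ and then evaluates its coordinate identities (5) at $\tau=1$ and (3) at $\sigma=1$ — the two computations amount to the same cancellation in $A$.
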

\begin{proof}
\indent i) According to Theorem 1.1, any autoequivalence $F^{\sigma},\sigma \in \Gamma, $ of a factor set is of the form
$$(\varphi^{\sigma}, f^{\sigma}): \mathcal S \longrightarrow \mathcal S.$$
Since $\widetilde{F^\sigma}_{x,y} : F^{\sigma}(xy) \longrightarrow F^{\sigma}x.F^{\sigma}y , \sigma \in \Gamma$ is a morphism in $(\Pi,A),$ we have
\[F^{\sigma}(xy) = F^{\sigma}x.F^{\sigma}y, \forall x,y \in \Pi.\]
This stated that $\varphi^{\sigma}=F^{\sigma}$ is a endomorphism in $\Pi$. Furthermore, $F^{\sigma}$ is an equivalence, so that $\varphi^{\sigma}$ is an automorphism of group $\Pi$, that is $\varphi^{\sigma} \in Aut\Pi$. On the other hand, since $\theta^{\sigma,\tau}_x : F^{\sigma}F^{\tau}x \longrightarrow F^{\sigma\tau}x$ is an arrow in $(\Pi,A),$ we have
\[(F^{\sigma}F^{\tau})(x) = F^{\sigma\tau}(x) , \forall x\in \Pi. \]
Thus, $\varphi^{\sigma}\varphi^{\tau} = \varphi^{\sigma\tau}$. This proved that
\[\begin{matrix}
\varphi: \Gamma &\longrightarrow& Aut\Pi\\
\sigma &\longmapsto& \varphi^{\sigma}
\end{matrix}\]
is a homomorphism of groups. Then $\varphi^1=\varphi(1)=id_{\Pi }$.

 Let $\widetilde{F^\sigma}_{x,y} = (\sigma(xy), \widetilde{f}^{\sigma}(x,y))$, in which $\sigma(xy)=\varphi^{\sigma}(xy); \widetilde{f}^{\sigma}: \Pi^{2}\longrightarrow A$ and
$\widehat{F^\sigma}= (1,c^{\sigma}): F^{\sigma}1 \longrightarrow 1$ are maps.

From the definition of the monoidal functor $F^\sigma$, we have
\begin{equation}
\sigma x.\widetilde{f}^{\sigma}(y,z)-\widetilde{f}^{\sigma}(xy,z)+
\widetilde{f}^{\sigma}(x,yz)-\widetilde{f}^{\sigma}(x,y)=a(\sigma x,\sigma y,\sigma z)-f^{\sigma}(a(x,y,z))
\end{equation}
\begin{equation}
 (\varphi^{\sigma}x)c^{\sigma} + \widetilde{f}^{\sigma}(x,1)=0
 \end{equation}
\begin{equation}
 c^{\sigma} + \widetilde{f}^{\sigma}(1,x)=0
 \end{equation}
 We now observe isomorphisms of monoidal functors
$\theta^{\sigma,\tau}  = (\theta^{\sigma,\tau}_x)$ where $$(\theta^{\sigma,\tau}_x) = (\varphi^{\sigma}x, t^{\sigma,\tau}(x)) : F^{\sigma}F^{\tau}(x) \longrightarrow F^{\sigma\tau}(x),$$
in which $t^{\sigma,\tau} : \Pi \longrightarrow A$ are maps.\\
We have the following commutative diagrams
\[\begin{CD}
F^{\sigma}F^{\tau}(x) @> (\bullet, t^{\sigma,\tau}(x)) >> F^{\sigma\tau}(x)\\
@V (\bullet, f^{\tau}f^{\sigma}(a)) VV           @VV (\bullet,  f^{\sigma\tau}(a)) V\\
F^{\sigma}F^{\tau}(x) @> (\bullet, t^{\sigma,\tau}(x)) >> F^{\sigma\tau}(x) 
\end{CD}\]
\[\begin{CD}
F^{\sigma}F^{\tau}(xy) @> (\bullet, \widetilde{f}^{\sigma}(F^{\tau}x, F^{\tau}y)+f^{\sigma}\widetilde{f}^{\tau}(x,y)) >> F^{\sigma}F^{\tau}(x).F^{\sigma}F^{\tau}(y)\\
@V (\bullet, t^{\sigma,\tau}(xy)) VV           @VV (F^{\sigma\tau}x,t^{\sigma,\tau}(x))\otimes (F^{\sigma}y,t^{\sigma,\tau}(y)) V\\
F^{\sigma\tau}(xy) @> (\bullet, \widetilde{f}^{\sigma\tau}(x, y)) >> F^{\sigma\tau}(x).F^{\sigma\tau}(y)\\
\end{CD}\]
\[\begin{diagram}
\node{F^{\sigma}F^{\tau}1} \arrow[2]{s,l}{(1,t^{\sigma,\tau}(1))} \arrow{se,t}{(1,f^{\sigma}(c^{\tau})+c^{\sigma})}\\
\node[2]{1}\\
\node{F^{\sigma\tau}1} \arrow{ne,b}{(1,c^{\sigma\tau})}
\end{diagram}\]
\noindent From which we are led to
\begin{equation}
f^{\sigma}f^{\tau} = f^{\sigma\tau}
\end{equation}
\begin{equation}
F^{\sigma\tau}(x)t^{\sigma,\tau} (y) - t^{\sigma,\tau}(xy)+ t^{\sigma,\tau}(x) = \widetilde{f}^{\sigma\tau}(x,y)-\widetilde{f}^{\sigma}(F^{\tau}_x,F^{\tau}_y)- f^{\sigma}(\widetilde{f}^{\tau}(x,y)) 
\end{equation}
\begin{equation}
f^{\tau}(c^{\tau})+c^{\tau}- c^{\sigma \tau}=t^{\sigma, \tau}(1)
\end{equation}
From the equality (4), we are led to a homomorphism $$f: \Gamma \longrightarrow AutA$$ given by $f(\sigma)=f^{\sigma}$ and so that $f^1 = f(1) = id_A$.

 Now, let 
 \begin{equation} \sigma x = \varphi^{\sigma}x, \quad
\sigma c = f^{\sigma}c 
\end{equation}
 for all  $\sigma \in \Gamma, x \in \Pi
, c\in A.$
Thus, since $F^{\sigma}$ is a functor of the type $(\varphi^{\sigma}, f^{\sigma}),$ $\varphi^{\sigma}(xb)=\varphi^{\sigma}(x)f^{\sigma}(b)$, or
$$\sigma(xb)=\sigma(x)\sigma(b),$$
that is $A$ is a $\Pi$-module $\Gamma$-equivariant.
 
\indent ii) From the condition ii) in the definition of a factor set, we are led to
$$t^{\sigma,1} = t^{1,\sigma} = 0 $$
From the equality (5), for $\tau =1$, we obtain $\widetilde{f}^{1}_{x,y} = 0$, that is $\widetilde{F^1}_{x,y} = id$. From the equality (3), for $\sigma =1$, we have: $c^1 =0$, that is $\widehat{F}^1 = id$. Thus, $F^1$ is an identity monoidal functor. The theorem is proved.
\end{proof}



\section{Enough strict factor set and induced 3-cocycle}
When the monoidal category $\C$ is replaced with the
categorical group of the type $(\Pi,A)$ we obtain better
descriptions than in the general case. For example, in Theorem 3.2,
authors proved that the condition i) of the definition of factor
set of categorical groups of the type $(\Pi,A)$ is redundant. Now,
we continue "reducing" this concept in terms of other face.

In this paper, we call a factor set $(\theta,F)$ {\it \textbf{enough strict}} if $\widehat{F^{\sigma}}= id_I$ for all $\sigma \in \Gamma$.

\begin{dn} Let $\Gamma$ be a group and $\mathcal{C}$ be a Gr-category of the type $(\Pi,A)$. Factor sets $(\theta,F)$ and $(\mu,G)$ on $\Gamma$ with coefficients in $\mathcal{C}$ are cohomologous if there exists a family of isomorphisms of monoidal functors
\[
u^{\sigma}: (F^{\sigma}, \widetilde{F^{\sigma}},\widehat{F^{\sigma}})
\stackrel{\sim}{\longrightarrow} (G^{\sigma},
\widetilde{G^{\sigma}},\widehat{G^{\sigma}}) \quad (\sigma \in \Gamma)
\]  
satisfying
\[u^1 = id_{(\Pi,A)}\]
\[u^{\sigma\tau}.\theta^{\sigma\tau}=\mu^{\sigma,\tau}.u^{\sigma}G^{\tau}.F^{\sigma}u^{\tau} \quad(\sigma,\tau \in \Gamma)\]
\end{dn}

\begin{nx}\label{nx1} If the two representatives $(\theta, F), (\mu, G)$ are cohomologous, then $F^{\sigma}=G^{\sigma}, \sigma\in \Gamma.$ 
\end{nx}

Indeed, from the definition of cohomologous factor sets, there exists a family of isomorphisms of monoidal functors $$u^{\sigma}:
(F^{\sigma}, \widetilde{F^{\sigma}},
\widehat{F}^{\sigma})\rightarrow (G^{\sigma}, \widetilde{G^{\sigma}}
\widehat{G}^{\sigma})\quad (\sigma \in \Gamma).$$

Since$u_x^{\sigma}: F^{\sigma}x \rightarrow G^{\sigma}x $ is an arrow in $(\Pi,A),$ we have
$G^{\sigma}x = F^{\sigma}x.$

Furthermore, for any $a \in A,$ 
by the commutativity of the diagram \[ \begin{diagram} \node{F^{\sigma}x}
\arrow{e,t}{u_x^{\sigma}} \arrow{s,l}{F^{\sigma}(x,a)}
\node{G^{\sigma}x} \arrow{s,r}{G^{\sigma}(x,a)}\\
\node{F^{\sigma}x}
\arrow{e,t}{u_x^{\sigma}}\node{G^{\sigma}x}\end{diagram}
\]by the commutativity of the diagram $F^{\sigma}(x,a)=G^{\sigma}(x,a).$

Extending Lemma 1.1 [2] for a factor set, we have

\begin{bd} Let $\mathcal S$ be a categorical group of the type
$(\Pi,A)$. Any factor set $(\theta,F)$ on $\Gamma$ with cofficients
in $\mathcal S$ is cohomologous to an enough strict factor set $(\mu,
G)$.
\end{bd}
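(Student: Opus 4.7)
The plan is to construct a cohomologous enough strict factor set $(\mu, G)$ by transporting each $F^\sigma$ along a carefully chosen iso of monoidal functors $u^\sigma : F^\sigma \to G^\sigma$ whose components absorb the scalar $c^\sigma$ that records the unit-strictness defect of $F^\sigma$. By Theorem 2.3 I may write $\widehat{F^{\sigma}} = (1,c^\sigma)$ and $\widetilde{F^{\sigma}}_{x,y} = (\sigma(xy),\widetilde{f}^{\sigma}(x,y))$, and by Remark 3.2 any such $u^\sigma$ forces $G^\sigma = F^\sigma$ on objects and on morphisms, so $u^\sigma$ is determined by a map $\beta^\sigma:\Pi\to A$ via $u^\sigma_x = (\varphi^\sigma x,\beta^\sigma(x))$. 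The unit-compatibility square for $u^\sigma$ then gives $\widehat{G^{\sigma}} = (1, c^\sigma - \beta^\sigma(1))$, so the key choice is the constant
$$\beta^\sigma(x) = c^\sigma \qquad (x \in \Pi),$$
which instantly yields $\widehat{G^{\sigma}} = \mathrm{id}_I$ (the enough strict condition) and, because $F^1 = \mathrm{id}$ forces $c^1 = 0$, also delivers $u^1 = \mathrm{id}_{(\Pi,A)}$.

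Next I would transport the monoidal structure. Since each $u^\sigma$ is an iso and $F^\sigma$ is monoidal, there is a unique monoidal structure on $G^\sigma$ making $u^\sigma$ monoidal; the $\otimes$-compatibility square forces
$$\widetilde{G^{\sigma}}_{x,y} = \bigl(\sigma(xy),\ \widetilde{f}^{\sigma}(x,y) + (\sigma x)c^\sigma\bigr),$$
and equations (2) and (3) of Theorem 2.3 immediately give the normalisations $\widetilde{g}^{\sigma}(x,1) = \widetilde{g}^{\sigma}(1,y) = 0$, consistent with $\widehat{G^{\sigma}} = \mathrm{id}$. The coherence pentagon for $\widetilde{G^{\sigma}}$ is inherited from the one for $\widetilde{F^{\sigma}}$ along the isomorphism $u^\sigma$, so $(G^\sigma,\widetilde{G^{\sigma}},\widehat{G^{\sigma}})$ is indeed a monoidal autoequivalence.

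To produce the remaining datum I would define $\mu^{\sigma,\tau}$ by forcing the defining square of Definition 3.1 to commute, namely
$$\mu^{\sigma,\tau} = u^{\sigma\tau}\circ \theta^{\sigma,\tau}\circ (u^\sigma G^\tau)^{-1}\circ (F^\sigma u^\tau)^{-1},$$
and then verify the 3-cocycle diagram iii) of Definition 2.1 for $\mu$. This reduces to the corresponding diagram for $\theta$ by substituting the above formula and chasing: the various $u^\gamma$'s appearing on the boundary of the pasted hexagon cancel in pairs. The normalisations $\mu^{1,\sigma} = \mu^{\sigma,1} = \mathrm{id}$ follow at once from $u^1 = \mathrm{id}$ and $\theta^{1,\sigma} = \theta^{\sigma,1} = \mathrm{id}$, so $(\mu,G)$ is a factor set cohomologous to $(\theta,F)$.

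The main obstacle is not any single calculation but the bookkeeping across three layers of data (underlying functors, their monoidal structures, and isomorphisms between monoidal functors); one has to simultaneously kill $c^\sigma$, preserve $u^1 = \mathrm{id}$, and be consistent with the induced $\mu^{\sigma,\tau}$. The crucial observation that unlocks the argument is that the unit-strictness defect is captured by a \emph{single} element $c^\sigma \in A$ and is therefore killed by the \emph{constant} modification $\beta^\sigma \equiv c^\sigma$; after that, the remaining verifications are routine diagram chases using equations (2)--(6) of Theorem 2.3.
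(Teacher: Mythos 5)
Your proof is correct and follows essentially the same route as the paper: transport each $F^\sigma$ along a monoidal isomorphism $u^\sigma$ whose component at the unit absorbs $\widehat{F^\sigma}=(1,c^\sigma)$, then define $\mu^{\sigma,\tau}$ by conjugating $\theta^{\sigma,\tau}$ through the $u$'s and check the cocycle diagram by cancellation. The only (immaterial) difference is your choice of the constant modification $u^\sigma_x=(\sigma x,c^\sigma)$ for all $x$, where the paper takes $u^\sigma_x=\mathrm{id}$ for $x\neq 1$ and $(\widehat{F^\sigma})^{-1}$ at $x=1$; both kill the unit defect and yield cohomologous enough strict factor sets.
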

\begin{proof}
For each $\sigma \in \Gamma$, consider a family of isomorphisms in $\mathcal S$: 
$$u_x^{\sigma} =
\begin{cases}
id_{F^{\sigma}x} & \text{ if $x \neq 1$}\\
(\widehat{F^{\sigma}})^{-1} & \text{ if $x = 1$}
\end{cases}$$

\noindent where $1\neq\sigma \in \Gamma$, and  $ u^1=id.$

Then, we define $G^{\sigma}$ in a unique way such that $u^{\sigma}: G^{\sigma} \rightarrow F^{\sigma}$ is a natural transformation by setting $G^{\sigma}=F^{\sigma}$ and:
$$ \widetilde{G^{\sigma}}_{x, y}= (u^{\sigma}_{x} \otimes
u^{\sigma}_{y})^ {-1}\widetilde{F^{\sigma}}_{x, y}(u^{\sigma}_{xy}); \quad
\widehat{G^{\sigma}}= id_I$$
For such setting, clearly we have \[ G^{\sigma}=(G^{\sigma},\widetilde{G^{\sigma}},\widehat{G^{\sigma}}):
\mathcal S \rightarrow \mathcal S \] is a monoidal
equivalence. In particular, we have:
$ \widehat{G^{\sigma}}= id_I$.
This states the enough strictness of the family of functors $G^{\sigma}$, as well as of the factor set $(\mu,G)$.

Now, we set $\mu^{\sigma,\tau}: G^\sigma G^\tau \rightarrow G^{\sigma \tau}$ the natural transformation which makes the following diagram
\[\begin{diagram}
\node{G^\sigma G^\tau} \arrow[2]{e,t,..}{\mu^{\sigma,\tau}} \arrow {se,r}{G^\sigma u^\tau}
\node[2]{G^{\sigma\tau}} \arrow[2]{e,t}{u^{\sigma\tau}}
\node[2]{F^{\sigma\tau}}\\
\node[2]{G^\sigma F^\tau}\arrow[2]{e,t}{u^\sigma F^\tau}
\node[2]{F^\sigma F^\tau} \arrow{ne,r}{\theta^{\sigma,\tau}}\\
\node[3]{\textrm{Diagram 1}}
\end{diagram}\]

commute, for all $\sigma$, $\tau \in \Gamma$. Clearly, $\mu^{\sigma,\tau}$ is a
isomorphism of monoidal functors.

We will prove that the family of $\mu^{\sigma, \tau}$ satisfy the
condition ii) of the definition of a factor set. 
We now prove that they satisfy the condition iii). Consider the diagram:

\[\setlength\unitlength{0.5cm}
\begin{picture}(19,13)
\put(0, 13){$G^\sigma G^\tau G^\gamma$}
\put(0, 10){$G^\sigma G^\tau F^\gamma$}
\put(0, 7){$G^\sigma F^\tau F^\gamma$}
\put(0, 4){$G^\sigma F^{\tau \gamma}$}
\put(0, 1){$G^\sigma F^{\tau \gamma}$}

\put(9, 7){$F^\sigma F^\tau F^\gamma$}
\put(9, 4){$F^\sigma F^{\tau \gamma}$}

\put(18, 13){$G^{\sigma \tau} G^\gamma$}
\put(18, 10){$G^{\sigma \tau} F^\gamma$}
\put(18, 7){$F^{\sigma \tau} F^\gamma$}
\put(18, 4){$F^{\sigma \tau \gamma}$}
\put(18, 1){$G^{\sigma \tau \gamma}$}

\put(3.5, 13.3){\vector(1, 0){14.2}} \put(10, 13.6){\scriptsize $\mu^{\sigma, \tau} G^\gamma$}
\put(3.5, 10.3){\vector(1, 0){14.2}}\put(10, 10.6){\scriptsize $\mu^{\sigma, \tau} F^\gamma$}
\put(3.5, 7.3){\vector(1, 0){5.2}}\put(5, 7.6){\scriptsize $u^\sigma F^\tau F^\gamma$} \put(12.5, 7.3){\vector(1, 0){5.2}}\put(14, 7.6){\scriptsize $\theta ^{\sigma ,\tau}F^\gamma$}
\put(3, 4.3){\vector(1, 0){5.2}}\put(5, 4.6){\scriptsize $u^\sigma F^{\tau \gamma}$} \put(12, 4.2){\vector(1, 0){5.5}}\put(14, 4.4){\scriptsize $\theta ^{\sigma ,\tau\gamma}$}
\put(3, 1.3){\vector(1, 0){14.5}} \put(10, 1.6){\scriptsize $\mu^{\sigma, \tau \gamma}$}

\put(1.5, 12.8){\vector(0,-1){1.8}}\put(2.0,11.7){\scriptsize $ G^\sigma G^\tau u^\gamma$}
\put(1.5, 9.7){\vector(0, -1){1.8}}\put(2.0,8.7){\scriptsize $G^\sigma u^\tau F^\gamma$}
\put(1.5, 6.8){\vector(0, -1){1.8}}\put(2.0,5.7){\scriptsize $G^\sigma \theta ^{\tau,\gamma}$}
\put(1.5, 2){\vector(0, 1){1.8}}\put(2.0,2.7){\scriptsize $G^\sigma u^{\tau\gamma}$}

\put(19.0, 12.8){\vector(0, -1){1.8}}\put(17,11.7){\scriptsize $G^{\sigma \tau} u^\gamma$}
\put(19.0, 9.8){\vector(0, -1){1.8}}\put(17,8.7){\scriptsize $u^{\sigma \tau}F^\gamma$}
\put(19.0, 6.8){\vector(0, -1){1.8}}\put(17.5,5.7){\scriptsize $\theta ^{\sigma \tau,\gamma}$}
\put(19.0, 2){\vector(0, 1){1.8}}\put(17.5,2.7){\scriptsize $u^{\sigma \tau \gamma}$}

\put(10.5, 6.8){\vector(0, -1){2.0}}

\put(-0.5,13.3){\line(-1,0){1.5}}
\put(-2,13.3){\line(0,-1){12}}
\put(-2,1.3){\vector(1,0){1.5}}

\put(20.5,13.3){\line(1,0){1.5}}
\put(22,13.3){\line(0,-1){12}}
\put(22,1.3){\vector(-1,0){1.5}}

\put(10, 11.7){\scriptsize (I)}
\put(10, 8.7){\scriptsize (II)}
\put(5.5, 5.7){\scriptsize (III)}
\put(14.0, 5.7){ \scriptsize (IV)}
\put(10, 2.7){\scriptsize (V)}
\put(-1.0,8.7){\scriptsize (VI)}
\put(19.7,8.7){\scriptsize (VII)}

\put(-4.5, 8.7){\scriptsize $G^\sigma \mu^{\tau, \gamma}$}
\put(22.2, 8.7){\scriptsize $\mu^{\sigma \tau, \gamma}$}

\end{picture}\]
In this diagram, the region (I) commutes thanks to the naturality of $\mu^{\sigma,\tau};$ the regions (II), (V), (VI), (VII) commute thanks to the Diagram 1; the region (III) commutes thanks to the naturality of $u^\sigma;$ the region (IV) commutes thanks to the definition of the factor set $(\theta, F).$ So the perimater commutes. This completes the proof.
\end{proof}
We now show that any factor set induces a $\Gamma-$operator $3-$cocycle based on the following definition.

\indent Let a $\Gamma-$pair $(\Pi,A)$, that is, a $\Pi-$module $\Gamma-$equavariant $A$,
cohomology groups
$H_{\Gamma}^n(\Pi,A)$ studied in [3]. We recall that cohomology group
$H_{\Gamma}^n(\Pi,A)$,  with $n \leq 3$, can be computed as the cohomology group of the struncated
cochain complex:
\[\begin{CD}
 \stackrel{\sim}{C}_{\Gamma}(\Pi,A):0@>>> C_{\Gamma}^1(\Pi,A)
@>\partial>>C_{\Gamma}^2(\Pi,A)@>\partial>>Z_{\Gamma}^3(\Pi,A)@>>>0,
\end{CD}\]
\noindent in which $C_{\Gamma}^1(\Pi,A)$ consists of normalized maps $f:\Pi\rightarrow A$, $C_{\Gamma}^2(\Pi,A)$
consists of normalized maps $g:\Pi^2\cup(\Pi \times \Gamma)\rightarrow A$ and $Z_{\Gamma}^3(\Pi,A)$ consists of normalized maps $h:\Pi^3\cup (\Pi^2\times \Gamma)\cup (\Pi \times \Gamma^2)\rightarrow A$ satisfying the following $3-$cocycle conditions:\\
\begin{equation}
h(x,y,zt)+h(xy,z,t)=\ x(h(y,z,t))+h(x,yz,t)+h(x,y,z)
\end{equation}
\begin{equation}
{\sigma}(h(x,y,z))+h(xy,z,\sigma)+h(x,y,\sigma)=h(\sigma(x),
\sigma(y),\sigma(z))+(\sigma(x))(h(y,z,\sigma))+h(x,yz,\sigma)
\end{equation}
\begin{equation}
\sigma(h(x,y,\tau))+h(\tau(x),\tau(y),\sigma)+h(x,\sigma,\tau)+(\sigma\tau(x))(h(y,\sigma,\tau))=h(x,y,\sigma\tau)+h(xy,\sigma,\tau)
\end{equation}
\begin{equation}
\sigma(h(x,\tau,\gamma))+h(x,\sigma,\tau\gamma)=h(x,\sigma\tau,\gamma)+h(\gamma(x),\sigma,\tau)
\end{equation}
for all $x,y,z,t\in \Pi;\ \sigma,\tau,\gamma\in \Gamma$.\\
 For each $f\in C_{\Gamma}^1(\Pi,A)$, the coboundary $\partial f$ is given by
\begin{equation}
(\partial f)(x,y)=x(f(y))-f(xy)+f(y),
\end{equation}
\begin{equation}
(\partial f)(x,\sigma)=\sigma(f(x))-f(\sigma(x)),
\end{equation}
and for each $g\in C_{\Gamma}^2(\Pi,A), \partial g$ is given by:
\begin{equation}
(\partial g)(x,y,z)=x(g(y,z))-g(xy,z)+g(x,yz)-g(x,y),
\end{equation}
\begin{equation}
(\partial
g)(x,y,\sigma)=\sigma(g(x,y))-g(\sigma(x),\sigma(y))-\sigma(x)(g(y,\sigma))+g(xy,\sigma)-
g(x,\sigma),
\end{equation}
\begin{equation}
(\partial
g)(x,\sigma,\tau)=\sigma(g(x,\tau))-g(x,\sigma\tau)+g(\tau(x),\sigma).
\end{equation}
\begin{md} Any enough strict factor set $(\theta,F)$ on $\Gamma$ with coefficients in
 $\mathcal{S}= (\Pi,A,\xi)$ induces an element $h \in Z^3_{\Gamma}(\Pi,A)$.
\end{md}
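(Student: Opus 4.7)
The plan is to construct $h$ piecewise from the data of the factor set $(\theta,F)$ together with the associativity constraint $\xi$ of $\mathcal S$, and then to verify the four cocycle identities (8)--(11) by matching them directly to the coherence equations already extracted in the proof of Theorem \ref{dl2.3}, together with condition iii) of Definition 2.1.

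In the notation of the proof of Theorem \ref{dl2.3}, write $\widetilde{F^{\sigma}}_{x,y}=(\sigma(xy),\widetilde{f}^{\sigma}(x,y))$ and $\theta^{\sigma,\tau}_{x}=(\sigma\tau(x),t^{\sigma,\tau}(x))$. Up to a sign convention to be fixed along the way, I would define
\[
h(x,y,z)=\xi(x,y,z),\qquad h(x,y,\sigma)=\widetilde{f}^{\sigma}(x,y),\qquad h(x,\sigma,\tau)=t^{\sigma,\tau}(x),
\]
for $x,y,z\in\Pi$ and $\sigma,\tau\in\Gamma$. The first task is then to check that $h$ is normalized. On $\Pi^{3}$ this is just the normalization of $\xi$. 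The enough-strict hypothesis $\widehat{F^{\sigma}}=id_{I}$ forces $c^{\sigma}=0$ for every $\sigma$, so equations (2), (3) and (6) of the paper immediately yield $\widetilde{f}^{\sigma}(x,1)=\widetilde{f}^{\sigma}(1,x)=0$ and $t^{\sigma,\tau}(1)=0$; the remaining normalizations $t^{1,\sigma}(x)=t^{\sigma,1}(x)=0$ follow from condition ii) of Definition 2.1 read at the morphism level.

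Once $h$ has been shown to be normalized, the four cocycle relations are matched one by one to equations already present in the paper. Condition (8) is simply the statement that $\xi\in Z^{3}(\Pi,A)$. Condition (9) is a rearrangement of equation (1), after the identifications $\varphi^{\sigma}x=\sigma x$ and $f^{\sigma}a=\sigma a$ granted by part i) of Theorem \ref{dl2.3}. Condition (10) is a rearrangement of equation (5), under the same identifications. The one relation not already visible in the proof of Theorem \ref{dl2.3} is (11); I would obtain it by evaluating the coherence hexagon of condition iii) of Definition 2.1 at a single object $x\in\Pi$ and comparing the second coordinates of the two resulting compositions, which produces
\[
\sigma(t^{\tau,\gamma}(x))+t^{\sigma,\tau\gamma}(x)=t^{\sigma,\tau}(\gamma(x))+t^{\sigma\tau,\gamma}(x),
\]
i.e.\ exactly (11).

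The main obstacle is clerical rather than conceptual: equations (1) and (5) mix the notations $\varphi^{\sigma}$, $f^{\sigma}$ and $F^{\sigma}$ freely, and one must track signs and module actions carefully when converting them into the forms (9) and (10) so that the $\Gamma$-action on $A$ lines up on both sides. Apart from this bookkeeping, and the verification that the enough-strict hypothesis kills the unit-constraint terms $c^{\sigma}$ wherever they would otherwise spoil normalization, no further calculation is required.
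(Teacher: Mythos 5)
Your proposal is correct and follows essentially the same route as the paper: define $h=\xi\cup\widetilde{f}\cup t$, obtain the normalizations from the enough-strictness hypothesis via equations (2), (3), (6) and condition ii), and match the four cocycle identities to $\xi\in Z^{3}(\Pi,A)$, to equations (1) and (5), and to the second coordinate of the coherence condition iii) evaluated at an object. The only item you leave implicit is the normalization $h(x,y,1_{\Gamma})=\widetilde{f}^{1}(x,y)=0$, which the paper gets from $F^{1}=id$ (cf.\ part ii) of Theorem \ref{dl2.3}).
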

 
\begin{proof}
 Suppose
$F^{\sigma}=(F^{\sigma},\widetilde{F^\sigma},id)$. Then, we can write
$$
\widetilde{F^\sigma}_{x,y}=(\varphi^{\sigma}(xy),\widetilde{f}^{\sigma}(x,y))=(\sigma(xy),g(x,y,\sigma))$$

\noindent where $ \widetilde{f}: \Pi^2\times \Gamma \rightarrow A$ is a function.  
 
For the family of isomorphisms of monoidal functors
$\theta^{\sigma,\tau}=(\theta^{\sigma,\tau}_x)$, we are able to write\[
\theta^{\sigma,\tau}_x=(\varphi^{\sigma\tau}x, t^{\sigma,\tau}(x))=
(\varphi^{\sigma\tau}x, t(x,\sigma,\tau)) \] where $t: \Pi \times \Gamma^2 \rightarrow A$ is a function.

From functions $\xi,\widetilde{f}, t$ , in which $\xi$ is associated with the associative
constraint $a$ of $(\Pi,A)$, we determine the function $h$ as follows: \[ h:\Pi^3 \cup
(\Pi^2 \times \Gamma) \cup (\Pi \times \Gamma^2) \rightarrow A \] 
where $h= \xi\cup \widetilde{f}\cup t,$ in the sense
\[h \mid_{\Pi^3}=\xi;\quad h \mid_{\Pi^2 \times \Gamma}=\widetilde{f}; \textrm{ and } h \mid_{\Pi \times \Gamma^2}=t\]
The above determined $h$ is a $\Gamma-$operator $3-$cocycle. Indeed, the equalities $(1), (5)$ turn into:
\begin{equation} -\sigma x.h(y,z,\sigma)+h(xy,z,\sigma)+h(x,y,\sigma)-h(x,yz,\sigma)
=h(\sigma x,\sigma y,\sigma z)- \sigma(h(x,y,z))
\end{equation}
\begin{equation}  (\sigma\tau)x.h(y,\sigma,\tau)-h(xy,\sigma,\tau)-h(x,\sigma,\tau)
= h(x,y,\sigma\tau)-h(\tau x, \tau x,\sigma)-\sigma
h(x,y,\tau)\end{equation} 
Moreover, from the relations of $3-$cocycle
$\xi,$ we obtain:
\begin{equation}
xh(y,z,t)-h(xy,z,t)+h(x,yz,t)-h(x,y,zt)+h(x,y,z)=0
\end{equation}
The cocycle condition\[
\theta^{\sigma\tau,\gamma}.\theta^{\sigma,\tau}F^{\gamma}=
\theta^{\sigma,\tau\gamma}.F^{\sigma}\theta^{\tau,\gamma} \] 
yeilds
\begin{equation}
h(x,\sigma\tau,\gamma)+h(\gamma x,\sigma,\tau)= h(x,\sigma,
\tau\gamma)+\sigma h(x,\tau,\gamma)
\end{equation} for all $x,y,z \in \Pi; \quad \sigma,\tau \in \Gamma$. It follows that $h$ satisfies the relations of a $3-$cocycle in $Z^3_{\Gamma}(\Pi,A)$. However, we have to prove the normalized property of $h$.

First, since the unit constraints of $(\Pi, A)$ are strict and the factor set $(\theta,F)$ is enough strict, the equalities $(2), (3), (6)$ turn into:
$$ h(x,1,\sigma)= \widetilde{f}^{\sigma}(x, 1)=0=\widetilde{f}^{\sigma}(1, x)=h(1,x,\sigma)$$
$$ h(1,\sigma,\tau)=t^{\sigma, \tau}(1)= 0 $$
Since $\widehat{F^1}=id,$ we have
$$ h(x,y,1_{\Gamma})=\widetilde{f}^1(x,y)=0$$
Since the normalized property of associative constraint $a,$ 
$$ h(1,y,z)= h(x,1,z)=h(x,y,1)=0$$
Thanks to ii) in the definition of a factor set, we have
$$h(x,1_{\Gamma},\tau)=h(x,\sigma,1_{\Gamma})=0$$

\noindent Let $h^{(\theta,F)}=h,$ we have $h^{(\theta,F)} \in Z^3_{\Gamma}(\Pi,A)$. This completes the proof of theorem.
\end{proof} 
\section{Classification theorem}

Let $(\mu,G)$ be another enough strict factor set on $\Gamma$ with coefficients in $((\Pi,A,\xi),$ which is cohomologous to $(\theta,F)$. Thus, $\Pi-$module $\Gamma-$equivariant structure $A$ and the element $h^{(\mu, G)}\in H^3_{\Gamma}(\Pi,A)$ defined by
$(\mu,G)$ has the following property: 

\begin{md} Let two enough strict factor sets $(\mu, G)$, $(\theta, F)$ on $\Gamma$ with coefficients in categorical group $\mathcal S$ of the type $(\Pi,A)$ be cohomologous. Then, they determine the same structure of $\Pi-$module $\Gamma-$equivariant on $A$ and $3-$cocyles inducing $h^{(\theta,F)}$, $h^{(\mu, G)}$ are cohomologous.
\end{md}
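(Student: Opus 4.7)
The plan is to reduce the claim to a single coboundary computation in the truncated complex $\tilde{C}_\Gamma(\Pi,A)$. Part (i) is immediate from Remark~\ref{nx1}: two cohomologous factor sets satisfy $F^\sigma = G^\sigma$ for every $\sigma \in \Gamma$, hence by Theorem 1.1 they share the same pair $(\varphi^\sigma, f^\sigma)$, and formula (7) therefore equips $A$ with the same $\Pi$-module $\Gamma$-equivariant structure in both cases.

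For part (ii), I would parametrize the given isomorphism of monoidal functors $u^\sigma: F^\sigma \to G^\sigma$. Since $u^\sigma_x$ is an automorphism of $F^\sigma x = G^\sigma x$ in $(\Pi,A)$, there is a unique function $k: \Pi \times \Gamma \to A$ such that $u^\sigma_x = (F^\sigma x,\, k(x,\sigma))$. The normalization $u^1 = id$ and the compatibility of $u^\sigma$ with the (enough strict) unit constraints $\widehat{F^\sigma} = \widehat{G^\sigma} = id_I$ give $k(x,1) = 0$ and $k(1,\sigma) = 0$, so the rule $\eta|_{\Pi^2} = 0$, $\eta(x,\sigma) = -k(x,\sigma)$ defines a normalized 2-cochain $\eta \in C^2_\Gamma(\Pi,A)$. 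The claim is then that $h^{(\mu,G)} - h^{(\theta,F)} = \partial \eta$.

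To verify this I would write $\widetilde{F^\sigma}_{x,y} = (\bullet, g_F(x,y,\sigma))$, $\widetilde{G^\sigma}_{x,y} = (\bullet, g_G(x,y,\sigma))$, $\theta^{\sigma,\tau}_x = (\bullet, t_F(x,\sigma,\tau))$, $\mu^{\sigma,\tau}_x = (\bullet, t_G(x,\sigma,\tau))$, and translate the remaining two axioms of "isomorphism of monoidal functors" into $A$-identities by means of the composition law $(x,u)\circ(x,v) = (x,u+v)$ and the tensor law $(x,u)\otimes(y,v) = (xy,\, u + xv)$. The tensor-compatibility square
\[
(u^\sigma_x \otimes u^\sigma_y) \circ \widetilde{F^\sigma}_{x,y} = \widetilde{G^\sigma}_{x,y} \circ u^\sigma_{xy}
\]
yields $g_G - g_F = \sigma x \cdot k(y,\sigma) - k(xy,\sigma) + k(x,\sigma)$, which is exactly $(\partial\eta)(x,y,\sigma)$ by formula (15). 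Spelling out the cohomologous identity $u^{\sigma\tau} \cdot \theta^{\sigma,\tau} = \mu^{\sigma,\tau} \cdot (u^\sigma G^\tau) \cdot (F^\sigma u^\tau)$ at $x$ and using $F^\sigma(u^\tau_x) = (\bullet, \sigma \cdot k(x,\tau))$ gives $t_G - t_F = -\sigma k(x,\tau) + k(x,\sigma\tau) - k(\tau x, \sigma)$, which is exactly $(\partial\eta)(x,\sigma,\tau)$ by formula (16).

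On the $\Pi^3$-component both cocycles restrict to the associativity constraint $\xi$, so their difference is $0 = (\partial\eta)(x,y,z)$ trivially (formula (14), since $\eta|_{\Pi^2} = 0$). Collecting the three cases, $h^{(\mu,G)} - h^{(\theta,F)} = \partial \eta$ in $Z^3_\Gamma(\Pi,A)$, and the cocycles are cohomologous. The only delicate point is to keep the signs and the order of composition consistent while reading off the second coordinates from each square; once the parametrization of $u^\sigma$ by $k$ is fixed, the rest is a mechanical matching against the coboundary formulas (14)–(16).
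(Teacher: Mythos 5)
Your proposal is correct and follows essentially the same route as the paper: reduce part (i) to Remark~\ref{nx1}, encode the natural isomorphisms $u^\sigma$ as a normalized $2$-cochain supported on $\Pi\times\Gamma$, and read the two compatibility axioms off as exactly the coboundary formulas (15)--(16) (your sign convention $\eta=-k$ merely swaps which difference $h^{(\mu,G)}-h^{(\theta,F)}$ versus $h^{(\theta,F)}-h^{(\mu,G)}$ equals the coboundary, which is immaterial). No gaps.
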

\begin{proof}
According to Remark \ref{nx1}, $F^{\sigma}=G^{\sigma} (\sigma \in \Gamma$. Then, they induce the same $\Pi$-module $\Gamma$-equivariant structure, according to the relation (7) and Theorem \ref{dl2.3}.

Now, we prove that elements $h^{(\theta,F)}$ and $h^{(\mu, G)}$ are cohomologous.
We denote $h^{(\mu, G)}=h'$. Hence, by determining of $h^{(\mu,G)}$ referred in Proposition 3.4, we have
\[  (\sigma(xy),h'(x,y,\sigma))= \widetilde{H^\sigma}_{x,y};
(\sigma \tau x,h'(x,\sigma,\tau)= \mu^{\sigma,\tau}x,\]
for all  $x,y \in \Pi,\sigma, \tau \in \Gamma$ 
Let $u: \Pi \times \Gamma \rightarrow A$ be the function defined by $u(x, \sigma)=u^{\sigma}_{x}$. It determines an extending $2-$cochain of $u$, denoted by $g$, with $g|_{\Pi^2}: \Pi^2 \rightarrow A$ is the null map.\\
\indent Since $(u^{\sigma\tau}.\theta^{\sigma,\tau})x =
(\mu^{\sigma,\tau}. u^{\sigma}G^{\tau} .
F^{\sigma}u^{\tau})x,$ we have
\begin{equation}
g(x,\sigma\tau)+h(x,\sigma,\tau)=h'(x,\sigma,\tau)+g(\tau x,
\sigma)+\sigma g(x,\tau)
\end{equation}

Since $ \widetilde{G^\sigma}_{x,y}\cdot u^{\sigma}_ {x \otimes y} =
 (u^{\sigma}_{x} \otimes
u^{\sigma}_{y})\cdot \widetilde{F^\sigma}_{x,y},$ we have
\begin{equation}
h'(x,y,\sigma)-h(x,y,\sigma)= g(x,\sigma)+\sigma
 xg(y,\sigma)-g(xy,\sigma)
\end{equation}

Since $\widehat{F^\sigma} = \widehat{H^\sigma}= id,$ we have $u^{\sigma}_1
= \widehat{H^\sigma}(\widehat{F^\sigma})^{-1}=id_1$. Hence
\begin{equation} g(1,\sigma)=0 \quad \text{ for all } \sigma \in \Gamma
\end{equation}

Since $u^1 = id_{((\Pi,A),\otimes)},$ we have
\begin{equation} g(x,1_{\Gamma})=0 \quad \text{ for all } x
\in G \end{equation}

By the determining of $g$ and relations (21)-(24), we have $g \in C^2_{\Gamma}(\Pi,A)$ and
$h^{(\theta,F)}-h^{(\mu,G)}= \partial g$. This completes the proof of proposition.
\end{proof} 

Thus, any factor set $(\theta,F)$ on $\Gamma$ with coefficients in categorical groups of the type $(\Pi,A)$ determines a structure of $G-$module $\Gamma-$equivariant $A$ and an element $h^{(\theta,F)} \in
H^3_{\Gamma}(\Pi,A)$ uniquely.

Now, we consider the problem: Giving an element $h \in
Z^3_{\Gamma}(\Pi,A),$ does there exist a factor set $(\theta,F)$ on $\Gamma$ with coefficients in $(\Pi,A)$, inducing $h$.

According to the definition of $\Gamma-$operator cocycle, we have functions
\[h \mid_{\Pi^3}=\xi;\quad h \mid_{\Pi^2 \times \Gamma}=\widetilde{f}; \text{ and } h \mid_{\Pi \times \Gamma^2}=t\]
Hence, we may determine the factor set $(\theta, F)$:
\[F^{\sigma}x =
\sigma x; \quad F^{\sigma}(x,c)=(\sigma x, \sigma c)  \quad \text{ that is } f^{\sigma}(c)= \sigma c \] 
\[\widehat{F^\sigma}=id_1, \quad
 \widetilde{F^\sigma}_{x,y} = (\sigma(xy), \widetilde{f}(x,y,\sigma)) \] 
 \[\theta^{\sigma\tau}x = (\sigma\tau x, t(x,\sigma,\tau)),\]
for all $\sigma, \tau \in \Gamma, x \in \Pi, c \in A.$ Clearly, the above determined factor set induces $h$.

We now state the main result of the paper:

\begin{dl} There exists a bijection
$$\Omega: \mathcal S_{\Gamma}(\Pi, A)\rightarrow H^3_{\Gamma}(\Pi,A),$$
where $\mathcal S_{\Gamma}(\Pi, A)$ is the set of congruence classes of $\Gamma-$extensions of categorical groups of the type $(\Pi,A)$.
\end{dl}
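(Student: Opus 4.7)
The plan is to define $\Omega$ as a composition of three correspondences and then verify bijectivity using the results already accumulated. Given a congruence class of $\Gamma$-extensions in $\mathcal S_{\Gamma}(\Pi, A)$, I would first represent it by a crossed product $\Delta(\theta, F)$ for some factor set $(\theta, F)$ on $\Gamma$ with coefficients in $(\Pi, A, \xi)$; this is justified by Corollary~2.2 together with the $\Delta$-construction at the start of Section~2. Next, by Lemma~3.2, I would replace $(\theta, F)$ by a cohomologous enough strict factor set $(\theta', F')$, and finally by Proposition~3.4 I would assign to it the cohomology class of $h^{(\theta', F')} \in Z^3_{\Gamma}(\Pi, A)$, and define $\Omega([\Delta(\theta, F)]) = [h^{(\theta', F')}] \in H^3_{\Gamma}(\Pi, A)$.

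The well-definedness of $\Omega$ rests on Proposition~4.1: any two enough strict representatives in the same cohomology class of factor sets induce the same $\Pi$-module $\Gamma$-equivariant structure on $A$ and cohomologous $3$-cocycles. I would also need to observe that two congruent $\Gamma$-extensions yield cohomologous factor sets, which follows from the definition of congruence of crossed product extensions in conjunction with Proposition~2.1 (applied to the identity equivalence).

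For surjectivity, I would use the explicit construction given just before Theorem~4.2: starting from $h \in Z^3_{\Gamma}(\Pi, A)$, decompose it as $h = \xi \cup \widetilde{f} \cup t$ and define $F^{\sigma}$ and $\theta^{\sigma, \tau}$ by the formulas $F^{\sigma}x = \sigma x$, $F^{\sigma}(x,c) = (\sigma x, \sigma c)$, $\widetilde{F^{\sigma}}_{x,y} = (\sigma(xy), \widetilde{f}(x,y,\sigma))$, $\widehat{F^{\sigma}} = \mathrm{id}_1$, $\theta^{\sigma, \tau}_x = (\sigma\tau x, t(x,\sigma,\tau))$. The cocycle relations $(18)$--$(20)$ and the normalization of $h$ translate back into the axioms of an enough strict factor set (equations $(1)$--$(6)$), so $(\theta, F)$ is indeed a factor set; by construction $h^{(\theta, F)} = h$. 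The resulting class $[\Delta(\theta, F)]$ is then a preimage of $[h]$.

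The main obstacle is injectivity: if $(\theta, F)$ and $(\mu, G)$ are enough strict factor sets inducing cohomologous $3$-cocycles via some $g \in C^2_{\Gamma}(\Pi, A)$, I must show that the two crossed product extensions $\Delta(\theta, F)$ and $\Delta(\mu, G)$ are congruent. My plan is to run the argument of Proposition~4.1 in reverse: use the $2$-cochain $g$ and its restriction to $\Pi \times \Gamma$ to define a family of arrows $u^{\sigma}_x = (F^{\sigma}x, g(x, \sigma))$ in $(\Pi, A)$. The identities $(21)$--$(24)$ (read backwards from $\partial g = h^{(\theta, F)} - h^{(\mu, G)}$) encode exactly the commutativity conditions ensuring that the $u^{\sigma}$ are isomorphisms of monoidal functors $F^{\sigma} \to G^{\sigma}$ satisfying $u^{\sigma \tau} \cdot \theta^{\sigma, \tau} = \mu^{\sigma, \tau} \cdot u^{\sigma} G^{\tau} \cdot F^{\sigma} u^{\tau}$, i.e.\ a cohomology of factor sets in the sense of Definition~3.1. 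Standard constructions then show that a cohomology of factor sets induces a $\Gamma$-equivalence $\Delta(\theta, F) \to \Delta(\mu, G)$ which restricts to the identity on $\mathrm{Ker}$, hence a congruence of $\Gamma$-extensions. The delicate point is bookkeeping the normalization conditions ($u^1 = \mathrm{id}$, $g|_{\Pi^2} = 0$) so that the reconstruction lands in the correct class; these follow from the normalization of $g$ built into the cochain complex $\widetilde{C}_{\Gamma}(\Pi, A)$.
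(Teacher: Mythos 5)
Your construction of $\Omega$, the appeal to Proposition 4.1 for well-definedness, and the surjectivity argument via the explicit factor set assembled from $h=\xi\cup\widetilde f\cup t$ all coincide with the paper's proof (you are in fact more careful than the paper in noting that congruent extensions must be shown to yield cohomologous factor sets). The divergence is in the injectivity step, and there your plan has a genuine gap. The set $\mathcal S_{\Gamma}(\Pi,A)$ collects extensions of \emph{all} categorical groups of the type $(\Pi,A)$, so the two crossed products may sit over different bases $(\Pi,A,\xi)$ and $(\Pi,A,\xi')$; if $h-h'=\partial g$ with $g\in C^2_{\Gamma}(\Pi,A)$, restriction to $\Pi^3$ gives $\xi-\xi'=\delta(g|_{\Pi^2})$, so the normalization $g|_{\Pi^2}=0$ cannot in general be arranged. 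Even when $\xi=\xi'$ (so $g|_{\Pi^2}$ is merely an ordinary $2$-cocycle) you cannot discard it, because by the coboundary formula (14) the component $g|_{\Pi^2}$ contributes the terms $\sigma(g(x,y))-g(\sigma x,\sigma y)$ to $(\partial g)(x,y,\sigma)$. Your reconstruction of a cohomology of factor sets from $u^{\sigma}_x=(F^{\sigma}x,g(x,\sigma))$ uses relation (22), which is exactly (14) with those terms deleted; when $g|_{\Pi^2}\neq 0$ they survive and the $u^{\sigma}$ fail to be monoidal natural isomorphisms $F^{\sigma}\to G^{\sigma}$. So your argument only covers the sub-case in which $g$ is supported on $\Pi\times\Gamma$.

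The paper handles precisely the complementary sub-case: it uses only $g|_{\Pi^2}$ to build a Gr-equivalence $(K,\widetilde K):\mathcal S\to\mathcal S'$ with $\widetilde K_{x,y}=(\bullet,g(x,y))$, extends it to the crossed products by $K_{\Delta}(a,\sigma)=(K(a),\sigma)$, and never invokes the component $g|_{\Pi\times\Gamma}$ (nor verifies compatibility of $K_\Delta$ with the two factor sets, which is where that component would enter). A complete injectivity proof needs both ingredients composed: the base-change equivalence coming from $g|_{\Pi^2}$ followed by the congruence of factor sets coming from $g|_{\Pi\times\Gamma}$, which is your construction. If you add the base-change step, or restrict to a fixed base and explain how the surviving terms of (14) are absorbed, your route goes through and is, for that part, more explicit than the paper's.
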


\begin{proof}
Any element of $\mathcal S_{\Gamma}(\Pi, A)$ may have a crossed product $\Gamma-$extension $\Delta ((\theta, F), \C)$ be a presentative, where $\C$ is a categorical group $(\Pi, A, \xi).$ According to Proposition 3.3, it is possible to assume that $(\theta, F)$ is enough strict. Then, $(\theta, F)$ induces $3-$cocycle $h=h^{(\theta, F)}$ (Proposition 3.4). According to Proposition 4.1, the correspondence $cl(\theta, F)\rightarrow cl(h^{(\theta, F)})$ is a map. Thanks to the above remark, this correspondence is a surjection. We now prove that it is an injection.

Let $\Delta$ and $\Delta'$ be crossed product $\Gamma-$extension of Gr-categories $\mathcal S=\mathcal S(\Pi, A,\xi)$, $\mathcal S'=\mathcal S'(\Pi, A,\xi')$ by factor sets $(\theta, F)$, $(\theta', F')$. Moreover, $3-$cocycles inducing $h, h'$ are cohomologous. We will prove that $\Delta$ and $\Delta'$ are equivariant $\Gamma-$extensions.

According to the determining of $h, h',$ 
$$h \mid_{\Pi^3}=\xi,\ \ h' \mid_{\Pi^3}=\xi', \ \ \xi'=\xi+\delta g,$$ 
where $g:\Pi^2\rightarrow A $ is a map.
Then, Gr-categories $\mathcal S$ and $\mathcal S'$ are Gr-equivalent, so there exists a Gr-equivalence
$$(K, \widetilde{K}): \mathcal S\rightarrow \mathcal S',$$
where $\widetilde{K}_{x, y}=(\bullet, g(x, y)).$ We may extend the Gr-functor $(K, \widetilde{K})$ to a $\Gamma-$functor
$$(K_\Delta, \widetilde{K_\Delta}):\Delta\rightarrow \Delta'$$ 
as follows:
$$K_\Delta(x)=K(x)\ ; K_\Delta(a, \sigma)=(K(a), \sigma)\ ; \widetilde{K_\Delta}=\widetilde{K}.$$
It is easy to see that $(K_\Delta, \widetilde{K_\Delta})$ is a $\Gamma-$equivalence. Hence,
$\Omega$ is an injection.
\end{proof}
\vspace{0.5cm}

\begin{center}

\end{center}
 Address: Department of Mathematics\\
Hanoi National University of Education\\
136 Xuan Thuy Street, Cau Giay district, Hanoi, Vietnam.\\
Email: nguyenquang272002@gmail.com

\end{document}